\numberwithin{equation}{section}
\numberwithin{figure}{section}
\theoremstyle{plain}
\newtheorem{thm}{\protect\theoremname}
\theoremstyle{remark}
\theoremstyle{plain}
\newtheorem{lem}[thm]{\protect\lemmaname}
\DeclareFontFamily{OT2}{cmr}{\hyphenchar\font45 }
\DeclareFontShape{OT2}{cmr}{m}{l}{%
<5><6><7><8><9>gen*wncyr%
<10><10.95><12><14.4><17.28><20.74><24.88>wncyr10}{}
\DeclareMathAlphabet{\mathcyr}{OT2}{cmr}{m}{l}
\DeclareMathAlphabet{\mathcyb}{OT2}{cmr}{b}{l}
\SetMathAlphabet{\mathcyr}{bold}{OT2}{cmr}{b}{l}
\newcommand{\sha}{\mathbin{\widetilde{\mathcyr{sh}}}}
\newcommand{\sh}{\mathbin{\mathcyr{sh}}}
\DeclareMathOperator{\dep}{dep}
\DeclareMathOperator{\id}{id}
\DeclareMathOperator{\spann}{span}
\providecommand{\lemmaname}{Lemma}
\providecommand{\remarkname}{Remark}
\providecommand{\theoremname}{Theorem}
\begin{document}
\address[Minoru Hirose]{Institute for Advanced Research, Nagoya University,  Furo-cho, Chikusa-ku, Nagoya, 464-8602, Japan}
\email{minoru.hirose@math.nagoya-u.ac.jp}

\address[Hideki Murahara]{The University of Kitakyushu,  4-2-1 Kitagata, Kokuraminami-ku, Kitakyushu, Fukuoka, 802-8577, Japan}
\email{hmurahara@mathformula.page}

\address[Tomokazu Onozuka]{Institute of Mathematics for Industry, Kyushu University 744, Motooka, Nishi-ku, Fukuoka, 819-0395, Japan} \email{t-onozuka@imi.kyushu-u.ac.jp}
\title{On the linear relations among parametrized multiple series}
\author{Minoru Hirose}
\author{Hideki Murahara}
\author{Tomokazu Onozuka}
\begin{abstract}
Parametrized multiple series are generalizations of the multiple zeta
values introduced by Igarashi. In this work, we completely determine
all the linear relations among these parameterized multiple series. Specifically, we prove the
following two statements: the linear part of the Kawashima relation
for multiple zeta values can be generalized to the parametrized multiple
series; any linear relations among the parametrized multiple series
can be written as a linear combination of the linear part of the Kawashima relation. 
\end{abstract}

\subjclass[2010]{Primary 11M32}
\keywords{Multiple zeta values, parametrized multiple series, Kawashima relation}
\maketitle

\section{Introduction}

In \cite{Iga11,Iga12}, Igarashi studied parametrized multiple 
series (PMS), which are defined by 
\[
\zeta^{\textrm{PMS}}(k_{1},\dots,k_{r};\alpha)\coloneqq\sum_{0\le m_{1}<\cdots<m_{r}}\frac{(\alpha)_{m_{1}}}{m_{1}!}\cdot\frac{m_{r}!}{(\alpha)_{m_{r}}}\cdot\frac{1}{(m_{1}+\alpha)^{k_{1}}\cdots(m_{r}+\alpha)^{k_{r}}}\in\mathbb{C}
\]
for positive integers $k_{1},\dots,k_{r}$, with $k_{r}>1$, and a complex number $\alpha$, with $\Re\alpha>0$. 
Here, $(\alpha)_{m}:=\alpha(\alpha+1)\cdots(\alpha+m-1)$
is the Pochhammer symbol. The PMS are generalizations of the multiple
zeta values (MZVs) as $\zeta^{\textrm{PMS}}(k_{1},\dots,k_{r};1)$
is equal to the MZV
\[
\zeta(k_{1},\dots,k_{r})\coloneqq\sum_{0<m_{1}<\cdots<m_{r}}\frac{1}{m_{1}^{k_{1}}\cdots m_{r}^{k_{r}}}\in\mathbb{R}.
\]
Igarashi showed that the PMS satisfy the same relations as the cyclic sum formula \cite{HO03} and Ohno relation
\cite{Ohn99} with respect to MZVs. 
It is known from \cite{Kaw09} and \cite{TW10} that these two MZV relations are included in the so-called \emph{linear part of the Kawashima relation}, which is described as follows. 
Let $\mathfrak{H}\coloneqq\mathbb{Q}\left\langle x,y\right\rangle $
and $z_{k}:=yx^{k-1}$ for a positive integer $k$. 
Use $\mathfrak{H}^{0}:=\mathbb{Q}+y\mathfrak{H}x\subset\mathfrak{H}^{1}:=\mathbb{Q}+y\mathfrak{H}\subset\mathfrak{H}$.
Then, we define a $\mathbb{Q}$-linear map $Z\colon \mathfrak{H}^0\to\mathbb{R}$, 
ring automorphism $\phi:\mathfrak{\mathfrak{H}}\to\mathfrak{H}$, 
and bilinear product $\mathcal{\ast}$ on $\mathfrak{H}^1$ as
\begin{align*}
&Z(z_{k_{1}}\cdots z_{k_{r}}):=\zeta(k_{1},\dots,k_{r}),\\
&\phi(x):=x,\quad\phi(y):=x+y,\\
&1\ast u=u\ast1=u,\\
&z_{k}u\ast z_{l}v:=z_{k}(u\ast z_{l}v)+z_{l}(z_{k}u\ast v)+z_{k+l}(u\ast v).
\end{align*}
Then, the linear part of the Kawashima relation is given by the identity 
\begin{align} \label{kawa}
Z(\phi(w_{1}\ast w_{2})x)=0 \quad (w_{1},w_{2}\in y\mathfrak{H})
\end{align}
which is a special case of the Kawashima relation \cite{Kaw09}.
Now, we define a $\mathbb{Q}$-linear map $L_{\alpha}\colon y\mathfrak{H}x\to\mathbb{C}$ as 
\[
L_\alpha(z_{k_{1}}\cdots z_{k_{r}})
:=\zeta^{\textrm{PMS}}(k_{1},\dots,k_{r};\alpha+1). 
\]
Note that $Z(u)=L_{0}(u)$ for $u\in y\mathfrak{H}x$.
Thus, the first main result of this work is as follows: 
\begin{thm}
\label{thm:main1}For $w_{1},w_{2}\in y\mathfrak{H}$ and $\alpha\in\mathbb{C}$ with
$\Re\alpha>-1$, we have
\[
L_{\alpha}(\phi(w_{1}\ast w_{2})x)=0.
\]
\end{thm}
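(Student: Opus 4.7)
The plan is to generalize the proof of the classical linear Kawashima relation $Z(\phi(w_{1}\ast w_{2})x)=0$ to the $\alpha$-deformed setting, in three steps.

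First, I would seek an iterated integral representation for $L_{\alpha}$ on admissible words $z_{k_{1}}\cdots z_{k_{r}}\in y\mathfrak{H}x$. Rewriting the Pochhammer prefactor as
\[
\frac{(\alpha+1)_{m_{1}}}{m_{1}!}\cdot\frac{m_{r}!}{(\alpha+1)_{m_{r}}}=\prod_{m_{1}<j\leq m_{r}}\frac{j}{j+\alpha},
\]
one sees that the connecting factor interpolates between the two extreme summation indices. This suggests representing $L_{\alpha}(w)$ as an iterated integral over the simplex $0<t_{1}<\cdots<t_{N}<1$ with $\omega_{0}=dt/t$ for each letter $x$ and an $\alpha$-dependent 1-form (built from a factor of the form $t^{\alpha}\,dt/(1-t)$, possibly combined with an auxiliary Beta-function integration) for each letter $y$. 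Isolating the correct form so that the resulting integral reproduces the PMS, including the Pochhammer coupling between the innermost and outermost summation variables, is the key technical ingredient.

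Second, with such an integral representation in hand, $L_{\alpha}$ should inherit a shuffle-type product formula from the standard shuffle property of iterated integrals, while on the series side one has a (deformed) harmonic-type product structure inherited from the summation form of PMS. The ring automorphism $\phi$ (sending $y\mapsto x+y$) is precisely the algebraic device that bridges the sum and integral descriptions: it converts the harmonic product $\ast$ into the shuffle product up to boundary corrections. Following Kawashima's strategy, the element $\phi(w_{1}\ast w_{2})\,x$ arises as the discrepancy between the shuffle and harmonic expansions of a common product, and the factor of $x$ on the right plays the role of a regularization absorbing the divergent boundary contributions.

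Third, once the shuffle identity and the intertwining property of $\phi$ are established in the $\alpha$-deformed setting, the vanishing of $L_{\alpha}(\phi(w_{1}\ast w_{2})\,x)$ follows by the same formal manipulation as in the classical ($\alpha=0$) case: both product expansions evaluate to the same number, and their difference is carried by $\phi(w_{1}\ast w_{2})\,x$.

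The main obstacle will be the first step. Because the Pochhammer prefactor couples $m_{1}$ and $m_{r}$ non-locally, the integrand for $L_{\alpha}$ is not a pure tensor of 1-forms depending on a single $t_{i}$ each, and an auxiliary integration variable (supplied by a Beta-function representation, as in $\frac{m!}{(\alpha+1)_{m}}=(\alpha+1)\int_{0}^{1}(1-u)^{\alpha}u^{m}\,du$) will almost certainly be required. Once the representation is correctly chosen so that the shuffle product on the iterated-integral side survives the $\alpha$-deformation and remains compatible with $\phi$, the remaining algebraic steps should parallel the MZV proof, and analyticity in $\alpha$ on $\Re\alpha>-1$ can be checked directly from the integral formula.
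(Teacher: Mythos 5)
Your first step is essentially already available and matches the paper's starting point: Igarashi's integral representation gives
$L_{\alpha}(yu_{1}\cdots u_{k}x)=I_{0,1}\bigl((t/(1-t))^{\alpha}y,u_{1},\dots,u_{k},((1-t)/t)^{\alpha}x\bigr)$,
with the $\alpha$-dependence carried entirely by the first and last $1$-forms; no auxiliary Beta-function variable is needed, so the obstacle you single out as the main difficulty is not the real one. The genuine gap is in your second and third steps. You assert that $\phi(w_{1}\ast w_{2})x$ ``arises as the discrepancy between the shuffle and harmonic expansions of a common product'' and that vanishing then ``follows by the same formal manipulation as in the classical case.'' That is a mischaracterization: comparing shuffle and harmonic (regularized) expansions of a common product yields the extended double shuffle relations, whose kernel elements have the form $w_{1}\sh w_{2}-w_{1}\ast w_{2}$ and involve no $\phi$. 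The linear part of Kawashima's relation is a different statement, not known to follow formally from such a comparison; its proofs go through Kawashima's Newton-series interpolation of multiple harmonic sums or through the Kaneko--Xu--Yamamoto generalized regularization theorem. Since you never identify the actual mechanism that annihilates $\phi(w_{1}\ast w_{2})x$ even at $\alpha=0$, there is nothing concrete to deform, and the proposal does not constitute a proof.

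For comparison, the paper's route is: (i) after moving the integral to the path $\gamma_{0,\infty}$, expand the $\alpha$-dependent endpoint forms to get $L_{\alpha}(\phi(w)x)=\sum_{m\ge 0}(-\alpha)^{m}L_{0}(\phi(\sigma_{m}(w))x)$; (ii) use the Kaneko--Xu--Yamamoto theorem to prove $L_{0}(\phi(w)x)=Z(\psi(w))$ with $\psi=(\ast)\circ(\id\otimes\tilde{S}\sigma_{1})\circ\Delta$; (iii) note that $\lambda=(\ast)\circ(\id\otimes\tilde{S}\sigma)\circ\Delta$ is a $\ast$-homomorphism whose $\alpha^{0}$-coefficient vanishes on $y\mathfrak{H}$, so its $\alpha^{1}$-coefficient $\psi$ vanishes on $y\mathfrak{H}\ast y\mathfrak{H}$. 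Some input of the type (ii)--(iii) --- going beyond shuffle/harmonic bookkeeping --- is precisely what your sketch is missing.
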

Theorem \ref{thm:main1} is a generalization of \eqref{kawa} and Igarashi's results. 
Furthermore, Theorem \ref{thm:main1} exhausts all the linear relations among the PMS, which is the second main result of this work. 
\begin{thm}
\label{thm:main2}Let $u\in y\mathfrak{H}x$. If $L_{\alpha}(u)=0$
for all $\alpha\in\mathbb{C}$ with $\Re\alpha>-1$, then
\[
u\in{\rm span}_{\mathbb{Q}}\{\phi(w_{1}\ast w_{2})x\mid w_{1},w_{2}\in y\mathfrak{H}\}.
\]
\end{thm}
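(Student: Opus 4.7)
The strategy is to convert the continuous hypothesis ``$L_\alpha(u)=0$ for all $\alpha$ with $\Re\alpha>-1$'' into an infinite discrete system by Taylor expansion in $\alpha$, and to show that, after invoking Theorem~\ref{thm:main1}, this system pins $u$ down modulo the Kawashima subspace $\mathcal{K}:=\mathrm{span}_{\mathbb{Q}}\{\phi(w_1\ast w_2)x\mid w_1,w_2\in y\mathfrak{H}\}\cap y\mathfrak{H}x$. For any $u\in y\mathfrak{H}x$, admissibility ($k_r>1$) makes $\alpha\mapsto L_\alpha(u)$ holomorphic on $\{\Re\alpha>-1\}$ (convergence is uniform on compacta by comparison with the Riemann zeta function), so by the identity theorem the hypothesis is equivalent to the vanishing of every Taylor coefficient at $\alpha=0$:
\[
L_\alpha(u)=\sum_{n\ge 0}c_n(u)\,\alpha^n,\qquad c_n\colon y\mathfrak{H}x\to\mathbb{R},\qquad c_0=Z.
\]

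Next I would derive an explicit combinatorial formula for each $c_n$. Expanding each Pochhammer ratio $(\alpha+1)_{m_i}/m_i!$ (and its reciprocal) and each $(m_i+\alpha+1)^{-k_i}$ as a power series in $\alpha$, collecting the $\alpha^n$-term, and resumming over $0\le m_1<\cdots<m_r$ should yield an identity $c_n(u)=Z(\Psi_n(u))$ for an explicit weight-raising $\mathbb{Q}$-linear operator $\Psi_n$ on (a regularization-extended version of) $y\mathfrak{H}x$, with $\Psi_0=\id$. Packaging these into a single formal-series map $\Psi\colon y\mathfrak{H}x\to y\mathfrak{H}x[[\alpha]]$, Theorem~\ref{thm:main1} applied weightwise says $\Psi(\mathcal{K})\subseteq\mathcal{K}[[\alpha]]$, so $\Psi$ descends to $\bar\Psi\colon y\mathfrak{H}x/\mathcal{K}\to(y\mathfrak{H}x/\mathcal{K})[[\alpha]]$, and the hypothesis becomes $Z\circ\bar\Psi(u)=0$ as a formal series. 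The proof would then be completed by exhibiting a left inverse to $\bar\Psi$ modulo $\mathcal{K}$ --- an explicit reconstruction of $[u]$ from $L_\alpha(u)$, for instance via a Mellin-type integral transform or an inductive weight-by-weight peeling that uses the singular behavior of $L_\alpha$ at $\alpha=-1,-2,\dots$.

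\textbf{Main obstacle.} The crux is this reconstruction step. One cannot simply invoke the (open) conjecture that Kawashima's relations exhaust the linear relations among ordinary MZVs --- that would require each individual $c_n(u)=0$ to force $\Psi_n(u)\in\mathcal{K}$, which is strictly stronger than the hypothesis provides. The rescue is that the $c_n$ are not independent numbers but Taylor coefficients of a \emph{single} holomorphic function, a much stronger rigidity constraint. Converting this parametric rigidity into a constructive recovery of $[u]$ is where I expect the bulk of the technical work to lie, probably through a careful combinatorial identification of $\Psi$ as a recognizable automorphism of $y\mathfrak{H}x$ (conjugation by some $\alpha$-dependent operator) or via an integral-transform inversion that turns the analytic data $L_\alpha(u)$ back into a word.
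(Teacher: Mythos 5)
Your plan correctly isolates the danger (one cannot appeal to the open conjecture that Kawashima's relations exhaust the $\mathbb{Q}$-linear relations among MZVs) and correctly senses that the rigidity of the single holomorphic function $\alpha\mapsto L_{\alpha}(u)$ must be exploited; but the step you yourself flag as the ``main obstacle'' --- producing a left inverse to $\bar{\Psi}$ modulo the Kawashima subspace via a Mellin-type transform or singularity analysis --- is precisely the content of the theorem, and your proposal leaves it unsolved. There is also a concrete unjustified step earlier: Theorem \ref{thm:main1} applied weightwise only gives $Z(\Psi_{n}(v))=0$ for $v$ in the Kawashima subspace, not $\Psi_{n}(v)$ lying in that subspace, so $\Psi$ does not descend to the quotient for the reason you give (the paper obtains the analogous stability from the purely algebraic fact that $\sigma=\sum_{m}\alpha^{m}\sigma_{m}$ is a homomorphism for the harmonic product, not from Theorem \ref{thm:main1}).

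The paper's route avoids any inversion and never passes to the quotient by the Kawashima subspace. Using an identity of Kaneko--Xu--Yamamoto it first proves the exact formula $L_{\alpha}(\phi(w)x)=Z_{\ast}^{(\alpha)}(\psi(w))$, where $\psi$ is an explicit $\mathbb{Q}$-linear operator with values in $y\mathfrak{H}x$, built from the (reversed) deconcatenation coproduct, the weight-raising map $\sigma_{1}$, and the harmonic product. The analytic hypothesis is then discharged in one stroke by quoting the known linear independence over $\mathbb{Q}$ of the parametrized series $Z_{\ast}^{(\alpha)}(z_{k_{1}}\cdots z_{k_{r}})$ as functions of $\alpha$, i.e.\ the injectivity of $Z_{\ast}^{(\alpha)}$ on $\mathfrak{H}^{0}$ due to Joyner and Bouillot; this is the precise form of the ``rigidity'' you were reaching for, and it is a theorem exactly because the parameter $\alpha$ is present, in contrast to the situation at $\alpha=0$. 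The problem thereby becomes purely algebraic: show $\ker\psi=y\mathfrak{H}\ast y\mathfrak{H}$. The inclusion $\supset$ follows because $\psi$ is the degree-one coefficient of a harmonic-product homomorphism; the inclusion $\subset$ is proved by computing $\psi$ modulo lower depth via quasi-symmetric functions, comparing it with a shuffle-type operator $\rho$ whose kernel lies in $y\mathfrak{H}\sh y\mathfrak{H}$ by duality with Lie polynomials, and finally trading $\sh$ for $\ast$ by induction on depth. None of these ingredients appears in your proposal, so as it stands the argument has a genuine gap at its central step.
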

Summarizing the above two theorems, we obtain the following result.
\begin{thm}
 \[
  \bigcap_{{\rm Re}(\alpha)>-1}\ker(L_{\alpha})={\rm span}_{\mathbb{Q}}\{\phi(w_{1}\ast w_{2})x\mid w_{1},w_{2}\in y\mathfrak{H}\}.
 \]
\end{thm}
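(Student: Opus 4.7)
The plan is to obtain this identity as an immediate corollary of Theorems \ref{thm:main1} and \ref{thm:main2}, which together cover the two opposite inclusions, so no new technique is needed at this final step.

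For the inclusion $\supseteq$, I would fix any $\alpha \in \mathbb{C}$ with $\Re\alpha>-1$ and any generator $u=\phi(w_1\ast w_2)x$ with $w_1,w_2\in y\mathfrak{H}$. Theorem \ref{thm:main1} yields $L_\alpha(u)=0$, so $u$ lies in $\ker(L_\alpha)$ for every such $\alpha$ and hence in the full intersection. Because each $\ker(L_\alpha)$ is a $\mathbb{Q}$-linear subspace of $y\mathfrak{H}x$, their intersection is too, and it therefore contains the $\mathbb{Q}$-span of all such generators.

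For the reverse inclusion $\subseteq$, I would take any $u\in\bigcap_{\Re\alpha>-1}\ker(L_\alpha)$. By the very definition of $L_\alpha$, such $u$ lies in $y\mathfrak{H}x$ and satisfies $L_\alpha(u)=0$ for every $\alpha$ with $\Re\alpha>-1$; this is exactly the hypothesis of Theorem \ref{thm:main2}, which then places $u$ in ${\rm span}_{\mathbb{Q}}\{\phi(w_1\ast w_2)x\mid w_1,w_2\in y\mathfrak{H}\}$, establishing the desired containment.

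There is no genuine obstacle at this step: the equality is a purely formal combination of the two preceding results, with Theorem \ref{thm:main1} supplying the containment of the span inside the intersection, and Theorem \ref{thm:main2} supplying the nontrivial completeness in the other direction. All the substantive difficulty has already been absorbed into the proofs of those two theorems, in particular into the generalization of the linear part of the Kawashima relation to the parametrized setting and into verifying that these generalized relations exhaust all linear relations among the PMS.
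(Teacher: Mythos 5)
Your proposal is correct and matches the paper exactly: the paper states this theorem as an immediate summary of Theorems \ref{thm:main1} and \ref{thm:main2}, with Theorem \ref{thm:main1} giving the inclusion of the span in every $\ker(L_\alpha)$ and Theorem \ref{thm:main2} giving the reverse containment. No further argument is needed or given in the paper.
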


\section{Proofs of the main theorems}
For a given path $\gamma:(0,1)\to\mathbb{R}$ and differential $1$-forms
$\omega_{1}(t),\dots,\omega_{k}(t)$, the iterated integral
$I_{\gamma}(\omega_{1},\dots,\omega_{k})$ is defined as
\[
I_{\gamma}(\omega_{1}(t),\dots,\omega_{k}(t))\coloneqq\int_{0<t_{1}<\cdots<t_{k}<1}\prod_{j=1}^{k}\omega_{j}(\gamma(t_{j})).
\]
Furthermore, we substitute $I_{0,1}(-)=I_{\gamma_{0,1}}(-)$ and $I_{0,\infty}(-)=I_{\gamma_{0,\infty}}(-)$,
where $\gamma_{0,1}(t)=t$ and $\gamma_{0,\infty}(t)=t/(t-1)$.
We identify $x\in\mathfrak{H}$ and $y\in\mathfrak{H}$ with the differential
1-forms $dt/t$ and $dt/(1-t)$, respectively. By fixing a complex number
$\alpha$ with $|\alpha|<1$, from \cite[Lemma 2.2]{Iga12}, we
have 
\begin{align}
L_{\alpha}(yu_{1}\cdots u_{k}x)=I_{0,1}((t/(1-t))^{\alpha}y,u_{1},\cdots,u_{k},((1-t)/t)^{\alpha}x),\label{eq1}
\end{align}
where $u_{1},\dots,u_{k}\in\{x,y\}.$ By a change of variables, (\ref{eq1})
is equal to
\[
I_{0,\infty}((-t)^{\alpha}\phi(y),\phi(u_{1}),\cdots,\phi(u_{k}),(-t)^{-\alpha}\phi(x)).
\]
Thus, we have
\begin{align}
L_{\alpha}(\phi(yu_{1}\cdots u_{k})x) & =I_{0,\infty}((-t)^{\alpha}y,u_{1},\cdots,u_{k},(-t)^{-\alpha}\phi(x))\nonumber \\
 & =\int_{0<t_{0}<t_{1}<\cdots<t_{k}<t_{k+1}<1}\left(\frac{\gamma(t_{0})}{\gamma(t_{k+1})}\right)^{\alpha}y(t_{0})\phi(x)(t_{k+1})\prod_{j=1}^{k}u_{j}(\gamma(t_{j}))\ \ \ (\gamma\coloneqq\gamma_{0,\infty})\nonumber \\
 & =\sum_{m=0}^{\infty}(-\alpha)^{m}\int_{\substack{0<t_{0}<t_{1}<\cdots<t_{k}<t_{k+1}<1\\
t_{0}<s_{1}<\cdots<s_{m}<t_{k+1}
}
}\frac{d\gamma(s_{1})\cdots d\gamma(s_{m})}{\gamma(s_{1})\cdots\gamma(s_{m})}y(t_{0})\phi(x)(t_{k+1})\prod_{j=1}^{k}u_{j}(\gamma(t_{j}))\nonumber \\
 & =\sum_{m=0}^{\infty}(-\alpha)^{m}I_{0,\infty}(y,\{u_{1},\dots,u_{k}\}\sh\{x\}^{m},\phi(x)),\label{eq:2-1}
\end{align}
where $\sh$ is the shuffle product and $\{x\}^{m}$ indicates the $m$
times repetition of $x$; the last expression equals 
\[
\sum_{m=0}^{\infty}(-\alpha)^{m}\sum_{f:\{1,\dots,k\}\hookrightarrow\{1,\dots,k+m\}}I_{0,\infty}(y,u'_{1},\dots,u'_{k+m},\phi(x)),
\]
where
\[
u'_{i}=\begin{cases}
u_{j} & \text{if there exists \ensuremath{j} such that \ensuremath{f(j)=i},}\\
x & \text{{\rm otherwise}.}
\end{cases}
\]
For a non-negative integer $m$, we define a linear map $\sigma_{m}\colon \mathfrak{H}^1\to \mathfrak{H}^1$
as
\begin{align*}
\sigma_{m}(1) & :=\delta_{m,0},\\
\sigma_{m}(yw) & :=y(w\sh x^{m}).
\end{align*}
In other words, we have $\sigma_{m}(z_{k_{1}}\cdots z_{k_{r}})=(-1)^{m}\sum_{e_{1}+\cdots+e_{r}=m}z_{k_{1}+e_{1}}\cdots z_{k_{r}+e_{r}}\prod_{j=1}^{r}\binom{k_{j}+e_{j}-1}{e_{j}}$.
Then, from (\ref{eq:2-1}), we obtain
\begin{align}
L_{\alpha}(\phi(w)x) & =\sum_{m=0}^{\infty}(-\alpha)^{m}L_{0}(\phi(\sigma_{m}(w))x)\label{eq:3}
\end{align}
for $w\in y\mathfrak{H}.$

Let $S_{1}$ be an automorphism on $\mathfrak{H}$ defined by $S_{1}(x)=x$ and $S_{1}(y)=x+y$; let $S$ be a $\mathbb{Q}$-linear map from $\mathfrak{H}^1$ to $\mathfrak{H}^1$
defined by $S(1)=1$ and $S(yw)=yS_{1}(w)$; let $\beta_{1}$ be an automorphism on $\mathfrak{H}$ defined by $\beta_{1}(x)=y$ and $\beta_{1}(y)=x$, 
and let $\beta$ be a $\mathbb{Q}$-linear map from $\mathfrak{H}^1$ to $\mathfrak{H}^1$ 
defined by $\beta(1)=1$ and $\beta(yw)=y\beta_{1}(w)$. 
We denote as $Z_{\ast}\colon \mathfrak{H}^1\to\mathbb{R}$ the
unique map characterized by the following properties: $Z_{\ast}|_{\mathfrak{H}^0}=Z$,
$Z_{\ast}(u\ast v)=Z_{\ast}(u)Z_{\ast}(v)$, and $Z_{\ast}(y)=0$.
Put $Z_{\ast}^{\star}:=Z_{\ast}\circ S:\mathfrak{H}^{1}\to\mathbb{R}$. 
We also define $Z_{\ast}^{(\alpha)},Z_{\ast}^{\star,(\alpha)}\colon \mathfrak{H}^1\to\mathbb{R}[[\alpha]]$
as 
\begin{align*}
Z_{\ast}^{(\alpha)}(w) & :=\sum_{m\ge0}(-\alpha)^{m}Z_{\ast}(\sigma_{m}(w)),\\
Z_{\ast}^{\star,(\alpha)}(w) & :=\sum_{m\ge0}(-\alpha)^{m}Z_{\ast}^{\star}(\sigma_{m}(w))=Z_{\ast}^{(\alpha)}(S(w)).
\end{align*}
Note that
\begin{align*}
Z_{\ast}^{(\alpha)}(z_{k_{1}}\cdots z_{k_{r}})
& =\sum_{0<m_{1}<\cdots<m_{r}}\frac{1}{(m_{1}+\alpha)^{k_{1}}\cdots(m_{r}+\alpha)^{k_{r}}}\in\mathbb{C},\\
Z_{\ast}^{\star,(\alpha)}(z_{k_{1}}\cdots z_{k_{r}})
& =\sum_{0<m_{1}\le\cdots\le m_{r}}\frac{1}{(m_{1}+\alpha)^{k_{1}}\cdots(m_{r}+\alpha)^{k_{r}}}\in\mathbb{C}
\end{align*}
for positive integers $k_{1},\dots,k_{r}$, with $k_{r}>1$, and complex
number $\alpha$, with $\Re\alpha>-1$.
\begin{lem}
\label{lem:KYX}For positive integers $k_{1},\dots,k_{r}$, we have
\begin{align*}
L_{0}(S\beta(z_{k_{1}}\cdots z_{k_{r}})x) & =\sum_{i=0}^{r}(-1)^{r-i+1}Z_{\ast}^{\star}(z_{k_{1}}\cdots z_{k_{i}})Z_{\ast}(\sigma_{1}(z_{k_{r}}\cdots z_{k_{i+1}})).
\end{align*}
\end{lem}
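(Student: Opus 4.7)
The first step is to unwind the definitions. A direct block-by-block computation, using $\beta_1(x)=y, \beta_1(y)=x$ and $S_1(y)=x+y, S_1(x)=x$, shows that
\[
S\beta(z_{k_1}\cdots z_{k_r})x = y(x+y)^{k_1-1}x(x+y)^{k_2-1}x\cdots x(x+y)^{k_r-1}x,
\]
so the LHS equals $Z$ applied to this explicit word --- equivalently, the iterated integral $I_{0,1}$ of the corresponding $1$-forms, by equation (\ref{eq1}) at $\alpha=0$.

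My plan is then to use the elementary identity $(x+y)^{k-1}x = (x+y)^k - (x+y)^{k-1}y$ on the leading block to obtain the recursion
\[
S\beta(z_{k_1}z_{k_2}\cdots z_{k_r})x = S\beta(z_{k_1+k_2}z_{k_3}\cdots z_{k_r})x - S\beta(z_{k_1})\cdot S\beta(z_{k_2}\cdots z_{k_r})x,
\]
and to prove the lemma by induction on $r$. The base case $r=1$ is the classical sum formula $Z(y(x+y)^{k-1}x)=k\zeta(k+1)=Z_*(\sigma_1(z_k))$, which matches the $i=0$ term of the RHS (the $i=1$ term vanishes since $\sigma_1(1)=0$). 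For the inductive step, the first term on the right of the recursion is controlled by the induction hypothesis applied to the depth-$(r-1)$ index $(k_1+k_2, k_3, \ldots, k_r)$, while the second term --- a $Z$-value of a concatenation of two words --- must be compared against the difference between the lemma's RHS at depth $r$ and its version after the first two indices have been merged. The key algebraic bridge here is the identity $Z_*^\star(u)Z_*(v)=Z_*(S(u)\ast v)$, which rewrites the RHS as a $Z_*$-value of a single stuffle product and permits term-by-term comparison with $Z$ of a concatenation.

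The main obstacle will be the combinatorial bookkeeping, in particular to explain why the second factor $Z_*(\sigma_1(z_{k_r}\cdots z_{k_{i+1}}))$ on the RHS carries the later indices in \emph{reversed} order. I expect this reversal to arise from the $t\mapsto 1-t$ symmetry of the iterated integral $I_{0,1}$ (which swaps $x\leftrightarrow y$ and reverses the word), under which the LHS equivalently becomes $Z(y(x+y)^{k_r-1}y(x+y)^{k_{r-1}-1}y\cdots y(x+y)^{k_1-1}x)$ --- the same blocks in reverse order, now separated by $y$'s instead of $x$'s. Combining this reversal-symmetric rewriting with the recursion applied to the \emph{trailing} block (where $(x+y)^{k_1-1}y$ splits as $(x+y)^{k_1}-(x+y)^{k_1-1}x$ and the extra $x$ shuffling into the inner word is precisely what $\sigma_1(yw)=y(w\sh x)$ encodes) should produce exactly the alternating sum claimed, completing the induction.
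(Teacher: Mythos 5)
Your setup is correct: $S\beta(z_{k_{1}}\cdots z_{k_{r}})x$ is indeed the word $y(x+y)^{k_{1}-1}x\cdots x(x+y)^{k_{r}-1}x$, the recursion coming from $(x+y)^{k-1}x=(x+y)^{k}-(x+y)^{k-1}y$ is a valid identity in $\mathfrak{H}$, the base case $r=1$ does reduce to the sum formula (note $\sigma_{1}(z_{k})=y(x^{k-1}\sh x)=kz_{k+1}$, so the $i=0$ term is $+k\zeta(k+1)$, matching), and $Z_{\ast}^{\star}(u)Z_{\ast}(v)=Z_{\ast}(S(u)\ast v)$ holds because $Z_{\ast}$ is a $\ast$-homomorphism. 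But your inductive step is a plan, not a proof: you yourself flag its central difficulty as ``the main obstacle'' and resolve it only by saying you ``expect'' it to work. Concretely, the recursion leaves you with $Z$ of the concatenation $y(x+y)^{k_{1}-1}\cdot S\beta(z_{k_{2}}\cdots z_{k_{r}})x$, a single word with no a priori product structure, and you must show it equals the difference of the claimed right-hand sides at $(k_{1},\dots,k_{r})$ and $(k_{1}+k_{2},k_{3},\dots,k_{r})$ --- an alternating sum of products involving \emph{stuffle-regularized} values (already for $r=2$ and $(k_{1},k_{2})=(1,2)$ the right-hand side contains $Z_{\ast}(z_{3}z_{1})$, which only makes sense via the regularization $Z_{\ast}(y)=0$). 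Explaining why the stuffle regularization enters with no correction terms, and why shuffling one extra $x$ produces exactly the weighted operator $\sigma_{1}$, is precisely the content of the generalized regularization theorem that the paper's proof simply cites (\cite[Theorem 3.1]{KXY21}, read off at the coefficient of $T^{0}x$). Your sketch does not supply this, so the argument is incomplete at its essential point.

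The duality remark is fine as far as it goes ($Z$ is invariant under word reversal combined with $x\leftrightarrow y$, and the dual of your word is $y(x+y)^{k_{r}-1}y\cdots y(x+y)^{k_{1}-1}x$), but it only relocates the problem: splitting the dualized word immediately produces non-admissible words ending in $y$, so regularization questions reappear, and the claim that ``the extra $x$ shuffling into the inner word is precisely what $\sigma_{1}$ encodes'' is asserted rather than derived. A self-contained proof along your lines would essentially amount to reproving the Kaneko--Xu--Yamamoto theorem; otherwise the honest route is the paper's one-line argument: cite \cite[Theorem 3.1]{KXY21} and check that your explicit word is what appears there.
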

\begin{proof}
The coefficient of $T^{0}x$ on the left-hand side of the equality in \cite[Theorem 3.1]{KXY21} is equal to $L_{0}(S\beta(z_{k_{1}}\cdots z_{k_{r}})x)$; the right-hand
side of the equality in \cite[Theorem 3.1]{KXY21} is equal to
\begin{align*}
 \sum_{m=0}^{\infty}(-x)^{m}\sum_{i=0}^{r}(-1)^{r-i}Z_{\ast}^{\star}(z_{k_{1}}\cdots z_{k_{i}})Z_{\ast}(\sigma_{m}(z_{k_{r}}\cdots z_{k_{i+1}})).
\end{align*}
Thus, the lemma follows from \cite[Theorem 3.1]{KXY21}. 
\end{proof}
Let $d$ be an automorphism on $\mathfrak{H}$ defined by $d(x)=x$, $d(y)=-y$.
Put $\tilde{S}:=S\circ d\colon\mathfrak{H}^1\to \mathfrak{H}^1$. 
Note that the map $\tilde{S}$ is a $\ast$-homomorphism.
We define a linear map $\Delta:\mathfrak{H}^{1}\to\mathfrak{H}^{1}\otimes\mathfrak{H}^{1}$
by $\Delta(z_{k_{1}}\cdots z_{k_{r}})=\sum_{i=0}^{r}z_{k_{1}}\cdots z_{k_{i}}\otimes z_{k_{r}}\cdots z_{k_{i+1}}$.
Note that $\Delta(z_{k_{1}}\cdots z_{k_{r}})$ is not $\sum_{i=0}^{r}z_{k_{1}}\cdots z_{k_{i}}\otimes z_{k_{i+1}}\cdots z_{k_{r}}$. 
\begin{lem}
\label{lem:key}For $w=z_{k_{1}}\cdots z_{k_{r}}\in y\mathfrak{H}$, we have
\[
L_{\alpha}(\phi(w)x)=\sum_{i=0}^{r-1}Z_{\ast}^{(\alpha)}(z_{k_{1}}\cdots z_{k_{i}})Z_{\ast}^{(\alpha)}(\tilde{S}\sigma_{1}(z_{k_{r}}\cdots z_{k_{i+1}})).
\]
\end{lem}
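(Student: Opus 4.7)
The plan is to reduce Lemma~\ref{lem:key} to the case $\alpha=0$ via \eqref{eq:3}, and to prove the $\alpha=0$ case by recasting Lemma~\ref{lem:KYX} in the coproduct-factored form that matches the shape of the target right-hand side. The main combinatorial ingredient is the identity
\[
\Delta\circ\sigma_m \;=\; \sum_{m_1+m_2=m}(\sigma_{m_1}\otimes\sigma_{m_2})\circ\Delta,
\]
which one verifies by a direct computation: on $z_{k_1}\cdots z_{k_r}$, both sides equal the weighted sum, over cut-points $i\in\{0,\ldots,r\}$ and compositions $(e_1,\ldots,e_r)$ with $e_1+\cdots+e_r=m$, of the tensor $z_{k_1+e_1}\cdots z_{k_i+e_i}\otimes z_{k_r+e_r}\cdots z_{k_{i+1}+e_{i+1}}$ with weight $\prod_j\binom{k_j+e_j-1}{e_j}$, corresponding on the right-hand side to the splitting $e_1+\cdots+e_i=m_1$ and $e_{i+1}+\cdots+e_r=m_2$.

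First I would establish the base case $L_0(\phi(u)x)=(Z_\ast\otimes (Z_\ast\circ\tilde S\sigma_1))\Delta(u)$ for $u\in y\mathfrak{H}$, which is Lemma~\ref{lem:key} at $\alpha=0$. This is the technical heart of the proof; it should follow from Lemma~\ref{lem:KYX} by absorbing the overall sign $(-1)^{r-i+1}$ and transferring $S$ from the prefix $Z_\ast^\star$ to $\tilde S=S\circ d$ on the suffix $\sigma_1$-factor, using that $d$ acts as $(-1)^{r-i}$ on $\sigma_1(z_{k_r}\cdots z_{k_{i+1}})$ (each summand has exactly $r-i$ occurrences of $y$), together with an algebraic identity converting $L_0(\phi(\cdot)x)$ into $L_0(S\beta(\cdot')x)$ for a suitable reindexed argument. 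Next, I would apply this to $u=\sigma_m(w)$; the coproduct compatibility then yields
\[
L_0(\phi(\sigma_m(w))x) = \sum_{i=0}^{r-1}\sum_{m_1+m_2=m}Z_\ast(\sigma_{m_1}(z_{k_1}\cdots z_{k_i}))\,Z_\ast(\tilde S\sigma_1\sigma_{m_2}(z_{k_r}\cdots z_{k_{i+1}})).
\]
Multiplying by $(-\alpha)^m$ and summing over $m\ge 0$, the left-hand side becomes $L_\alpha(\phi(w)x)$ by \eqref{eq:3}, and the right-hand side factors into a product of two generating series. The first factor equals $Z_\ast^{(\alpha)}(z_{k_1}\cdots z_{k_i})$ by definition; for the second factor, using that $\tilde S$ commutes with $\sigma_m$ on $\mathfrak{H}^1$ (a formal consequence of $\phi$ being a shuffle-algebra homomorphism, so that $\tilde S=S\circ d$ is compatible with the shuffle by $x^m$ baked into $\sigma_m$), together with $\sigma_1\sigma_{m_2}=\sigma_{m_2}\sigma_1$, one rewrites it as $Z_\ast^{(\alpha)}(\tilde S\sigma_1(z_{k_r}\cdots z_{k_{i+1}}))$, yielding the stated right-hand side of Lemma~\ref{lem:key}.

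The main obstacle is the base case: Lemma~\ref{lem:KYX} is stated with $S\beta$ on the left-hand side, $Z_\ast^\star$ on the prefix, and bare $\sigma_1$ on the suffix (with overall signs), whereas the target requires $\phi$ on the left-hand side, $Z_\ast$ on the prefix, and $\tilde S\sigma_1$ on the suffix (with no signs). The sign manipulation and the $S\leftrightarrow\tilde S$ exchange reconcile as sketched, but identifying the precise algebraic relationship between $\phi$ and $S\beta$ that equates the two left-hand sides is the subtle point; if the direct manipulation is not transparent, one can instead reprove the base case from scratch by extracting a different coefficient of the generating-function identity \cite[Theorem~3.1]{KXY21} that underlies Lemma~\ref{lem:KYX}.
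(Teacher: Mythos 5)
Your overall architecture is the same as the paper's: prove the case $\alpha=0$ from Lemma~\ref{lem:KYX}, then pass to general $\alpha$ via \eqref{eq:3}. The second half of your argument is correct and essentially identical to the paper's: the compatibility $\Delta\circ\sigma_{m}=\sum_{m_{1}+m_{2}=m}(\sigma_{m_{1}}\otimes\sigma_{m_{2}})\circ\Delta$, the factorization of the double generating series into $Z_{\ast}^{(\alpha)}\otimes Z_{\ast}^{(\alpha)}\tilde{S}\sigma_{1}$, and the commutation of $\tilde{S}$ and $\sigma_{1}$ with $\sigma_{m_{2}}$ (which you even justify more explicitly than the paper does, though the parenthetical about $\phi$ being a shuffle homomorphism should refer to $S_{1}$ and $d$ being letter substitutions).

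The gap is exactly where you flag it: the base case. You never produce the identity linking the left-hand sides, and the mechanism you propose --- ``transferring $S$ from the prefix $Z_{\ast}^{\star}$ to the suffix'' by sign bookkeeping --- cannot work as stated, because the prefix and suffix are independent tensor legs of $\Delta(w)$; an operator on one leg cannot be moved to the other leg term by term. Note that after absorbing the signs via $d$, the target reads $\sum_{i}(-1)^{r-i}Z_{\ast}(z_{k_{1}}\cdots z_{k_{i}})Z_{\ast}^{\star}(\sigma_{1}(z_{k_{r}}\cdots z_{k_{i+1}}))$, which has $Z_{\ast}$ and $Z_{\ast}^{\star}$ in the \emph{opposite} positions from Lemma~\ref{lem:KYX}; this is not a relabeling. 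The paper's resolution is the precise substitution you were looking for: one applies Lemma~\ref{lem:KYX} to $u=\tilde{S}(w)$, using the identity $L_{0}(\phi(w)x)=-Z(S\beta d\tilde{S}(w)x)$, and then the operator identity
\[
(\ast)\circ(\tilde{S}\otimes\sigma_{1})\circ\Delta\circ\tilde{S}=(\ast)\circ(\id\otimes\tilde{S}\sigma_{1})\circ\Delta,
\]
which follows from $\Delta\circ\tilde{S}=(\tilde{S}\otimes\tilde{S})\circ\Delta$, $\tilde{S}^{2}=\id$, and $\sigma_{1}\tilde{S}=\tilde{S}\sigma_{1}$; this is what simultaneously strips the $S$ from the prefix and installs $\tilde{S}$ on the suffix. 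Without this global precomposition by $\tilde{S}$ (and the accompanying identification of $\phi$ with $-S\beta d\tilde{S}$ under $L_{0}(\cdot\,x)$), the base case is not established, and your fallback of ``extracting a different coefficient of \cite[Theorem 3.1]{KXY21}'' is a statement of intent rather than a proof.
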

\begin{proof}
We first prove the case for $\alpha=0$. By Lemma \ref{lem:KYX}, for
$u\in y\mathfrak{H}$, we have
\[
Z(S\beta d(u)x)=-Z(f(u)),
\]
where $f\colon y\mathfrak{H}\to y\mathfrak{H}$ is a linear map defined
by 
\[
f(z_{k_{1}}\cdots z_{k_{r}})=\sum_{i=0}^{r}\tilde{S}(z_{k_{1}}\cdots z_{k_{i}})\ast\sigma_{1}(z_{k_{r}}\cdots z_{k_{i+1}}).
\]
Then, the left-hand side is calculated as 
\begin{align*}
L_{0}(\phi(w)x) & =-Z(S\beta d\tilde{S}(w)x)\\
 & =Z(f(\tilde{S}(w))).
\end{align*}
 Here, since $f=(\ast)\circ(\tilde{S}\otimes\sigma_{1})\circ\Delta$
and $\Delta\circ\tilde{S}=(\tilde{S}\otimes\tilde{S})\circ\Delta$,
we have
\begin{align*}
f\tilde{S} & =(\ast)\circ(\tilde{S}\otimes\sigma_{1})\circ\Delta\circ\tilde{S}\\
 & =(\ast)\circ(\tilde{S}\otimes\sigma_{1})\circ(\tilde{S}\otimes\tilde{S})\circ\Delta\\
 & =(\ast)\circ(\id\otimes\sigma_{1}\tilde{S})\circ\Delta\\
 & =(\ast)\circ(\id\otimes\tilde{S}\sigma_{1})\circ\Delta.
\end{align*}
Hence, we have 
\[
L_{0}(\phi(w)x)=\sum_{i=0}^{r-1}Z(z_{k_{1}}\cdots z_{k_{i}})Z(\tilde{S}\sigma_{1}(z_{k_{r}}\cdots z_{k_{i+1}}))
\]
and obtain the result for $\alpha=0$. 

From (\ref{eq:3}) and the result for $\alpha=0$, we have
\begin{align*}
L_{\alpha}(\phi(w)x) & =\sum_{m=0}^{\infty}(-\alpha)^{m}L_{0}(\phi(\sigma_{m}(w))x)\\
 & =\sum_{m=0}^{\infty}(-\alpha)^{m}(Z\otimes Z\tilde{S}\sigma_{1})\circ\Delta\circ\sigma_{m}(w)\\
 & =\sum_{m=0}^{\infty}\sum_{m_{1}+m_{2}=m}(-\alpha)^{m}(Z\otimes Z\tilde{S}\sigma_{1})\circ(\sigma_{m_{1}}\otimes\sigma_{m_{2}})\circ\Delta(w)\\
 & =\sum_{m_{1}=0}^{\infty}\sum_{m_{2}=0}^{\infty}(-\alpha)^{m_{1}+m_{2}}(Z\sigma_{m_{1}}\otimes Z\tilde{S}\sigma_{1}\sigma_{m_{2}})\circ\Delta(w)\\
 & =(Z_{\ast}^{(\alpha)}\otimes Z_{\ast}^{(\alpha)}\tilde{S}\sigma_{1})\circ\Delta(w)\\
 & =\sum_{i=0}^{r-1}Z_{\ast}^{(\alpha)}(z_{k_{1}}\cdots z_{k_{i}})Z_{\ast}^{(\alpha)}(\tilde{S}\sigma_{1}(z_{k_{r}}\cdots z_{k_{i+1}})).
\end{align*}
Thus, the lemma is proved. 
\end{proof}
We now define a linear map $\psi\colon y\mathfrak{H}\to y\mathfrak{H}$ as
\[
\psi(z_{k_{1}}\cdots z_{k_{r}}):=\sum_{i=0}^{r-1}z_{k_{1}}\cdots z_{k_{i}}\ast\tilde{S}(\sigma_{1}(z_{k_{r}}\cdots z_{k_{i+1}})).
\]
\begin{lem}
We have $\psi(y\mathfrak{H})\subset y\mathfrak{H}x$. 
\end{lem}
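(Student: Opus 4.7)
The plan is to introduce the linear projection $\pi\colon\mathfrak{H}^{1}\to\mathfrak{H}^{1}$ that extracts the ``ends in $y$'' component, defined by $\pi(1)=0$ and $\pi(z_{l_{1}}\cdots z_{l_{s}})=\delta_{l_{s},1}\,z_{l_{1}}\cdots z_{l_{s-1}}$ (equivalently $\pi(ux)=0$ and $\pi(uy)=u$ for every word $u$). Since the image of $\psi$ is visibly contained in $y\mathfrak{H}$ (the stuffle product sends $y\mathfrak{H}\times y\mathfrak{H}$ into $y\mathfrak{H}$ and $1\ast v=v$), the desired claim $\psi(w)\in y\mathfrak{H}x$ is equivalent to $\pi(\psi(w))=0$, which is what I would aim to establish.

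The first step is to record three structural properties of $\pi$. First, $\pi$ is a derivation for $\ast$: from the right-recursive form of the defining relation,
\[
az_{k}\ast bz_{l}=(a\ast bz_{l})z_{k}+(az_{k}\ast b)z_{l}+(a\ast b)z_{k+l},
\]
and the trivial observation $\pi(uz_{m})=\delta_{m,1}u$ (together with $\delta_{k+l,1}=0$ for $k,l\ge 1$), one deduces $\pi(u\ast v)=\pi(u)\ast v+u\ast\pi(v)$. Second, $\pi\sigma_{1}=\sigma_{1}\pi$: expanding $\sigma_{1}(z_{l_{1}}\cdots z_{l_{s}})$ via the explicit formula, only the summands that raise a non-terminal index survive $\pi$, and their total is precisely $\sigma_{1}\pi$ applied to the input. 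Third, $\pi\tilde{S}=-\tilde{S}\pi$: since $d$ multiplies each word by $(-1)^{(\text{number of }y\text{'s})}$ one gets $\pi d=-d\pi$, while $S(yuy)=yS_{1}(u)(x+y)$ has $y$-ending part $yS_{1}(u)=S(yu)$, so $\pi S=S\pi$.

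With these in hand I would conclude by a telescoping computation. Writing $w=z_{k_{1}}\cdots z_{k_{r}}$, set $a_{i}=z_{k_{1}}\cdots z_{k_{i}}$ and $b_{i}=\tilde{S}\sigma_{1}(z_{k_{r}}\cdots z_{k_{i+1}})$. Applying the derivation property to $\psi(w)=\sum_{i=0}^{r-1}a_{i}\ast b_{i}$ and using $\pi(a_{i})=\delta_{k_{i},1}\,a_{i-1}$ together with
\[
\pi(b_{i})=-\tilde{S}\sigma_{1}\pi(z_{k_{r}}\cdots z_{k_{i+1}})=-\delta_{k_{i+1},1}\,\tilde{S}\sigma_{1}(z_{k_{r}}\cdots z_{k_{i+2}})
\]
yields
\[
\pi(\psi(w))=\sum_{i=1}^{r-1}\delta_{k_{i},1}\,a_{i-1}\ast b_{i}\ -\ \sum_{i=0}^{r-1}\delta_{k_{i+1},1}\,a_{i}\ast\tilde{S}\sigma_{1}(z_{k_{r}}\cdots z_{k_{i+2}}).
\]
Reindexing the second sum by $j=i+1$ (its $j=r$ term vanishes because $\sigma_{1}(1)=0$) identifies it with the first sum, and the two cancel.

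The main technical point deserving care is the identity $\pi S=S\pi$: because $S_{1}$ sends $y$ to $x+y$, applying $S$ can in principle change the last letter of a word, but the new $x$-ending contribution is annihilated by $\pi$ while only the $y$-ending part survives, so the identity goes through after a small case analysis on whether the input ends in $x$ or in $y$. Everything else reduces to routine bookkeeping with the right-recursion for $\ast$ and with the explicit formulas for $\sigma_{1}$ and $\tilde{S}$.
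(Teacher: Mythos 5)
Your proof is correct, and it takes a genuinely different route from the paper's. The paper establishes the lemma by pushing $\psi$ through Hoffman's quasisymmetric-function realization $\Phi$ and explicitly evaluating the resulting nested sums, which collapse to the closed formula \eqref{eq:ttt}, visibly in $\Phi(y\mathfrak{H}x)$. You instead stay inside $\mathfrak{H}^{1}$, introduce the projection $\pi$ onto the ``ends in $y$'' component, and show $\pi\circ\psi=0$ by telescoping. I checked your three structural identities and they all hold: the derivation property of $\pi$ for $\ast$ does follow from the right-handed recursion $az_{k}\ast bz_{l}=(a\ast bz_{l})z_{k}+(az_{k}\ast b)z_{l}+(a\ast b)z_{k+l}$, though you should say a word about why that recursion is valid (reversal of words is a $\ast$-homomorphism for the quasi-shuffle product, since the bracket $z_{k}\cdot z_{l}=z_{k+l}$ is commutative); $\pi\sigma_{1}=\sigma_{1}\pi$ is immediate from $\sigma_{1}(z_{k_{1}}\cdots z_{k_{r}})=-\sum_{j}k_{j}z_{k_{1}}\cdots z_{k_{j}+1}\cdots z_{k_{r}}$ because the $j=r$ term ends in an index $\ge 2$; and $\pi\tilde{S}=-\tilde{S}\pi$ follows from $\pi d=-d\pi$ together with $\pi S=S\pi$, the latter because in $S(z_{k_{1}}\cdots z_{k_{r}})$ the only merged blocks ending in index $1$ are those whose last block is the singleton $\{k_{r}\}$ with $k_{r}=1$. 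The telescoping then closes since $b_{r}=\tilde{S}\sigma_{1}(1)=0$. Your argument is more elementary and self-contained; what the paper's computation buys is the explicit identity \eqref{eq:ttt} itself, which is reused in the proof of Lemma \ref{lem:.shuffle} to identify the top-depth part of $\psi$ with $\overline{\psi}$, so if one adopted your proof here, an equivalent of that computation would still be needed later.
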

\begin{proof}
We prove this lemma using the theory of quasisymmetric functions \cite{Hof15}.
Let $\Phi$ be a linear map from $\mathfrak{H}^1$ to the
space of quasisymmetric functions given by 
\[
\Phi(z_{k_{1}}\cdots z_{k_{r}})\coloneqq\sum_{0<m_{1}<\cdots<m_{r}}t_{m_{1}}^{k_{1}}\cdots t_{m_{r}}^{k_{r}}\in
\lim_{\substack{\longleftarrow\\n}} \mathbb{Q}[t_{1},\dots,t_{n}].
\]
Then, $\Phi$ is a bijection \cite[Theorem 2.2]{Hof15}. By definition,
we have
\begin{align}
 & \Phi(\psi(z_{k_{1}}\cdots z_{k_{r}}))\nonumber \\
 & =\sum_{i=0}^{r-1}(-1)^{r-i}\sum_{j=i+1}^{r}k_{j}\Phi(z_{k_{1}}\cdots z_{k_{i}}\ast S(z_{k_{r}}\cdots z_{k_{j+1}}z_{k_{j}+1}z_{k_{j-1}}\cdots z_{k_{i+1}}))\nonumber \\
 & =\sum_{j=1}^{r}k_{j}\sum_{i=0}^{j-1}(-1)^{r-i}\Phi(z_{k_{1}}\cdots z_{k_{i}}\ast S(z_{k_{r}}\cdots z_{k_{j+1}}z_{k_{j}+1}z_{k_{j-1}}\cdots z_{k_{i+1}}))\nonumber \\
 & =\sum_{j=1}^{r}k_{j}\sum_{i=0}^{j-1}(-1)^{r-i}\sum_{\substack{0<m_{1}<\cdots<m_{i}\\
0<m_{r}\leq\cdots\leq m_{i+1}
}
}t_{m_{1}}^{k_{1}}\cdots t_{m_{r}}^{k_{r}}\cdot t_{m_{j}}\nonumber \\
 & =\sum_{j=1}^{r}k_{j}\sum_{i=0}^{j-1}(-1)^{r-i}\left(\sum_{\substack{0<m_{1}<\cdots<m_{i}<m_{i+1}\\
0<m_{r}\leq\cdots\leq m_{i+1}
}
}+\sum_{\substack{0<m_{1}<\cdots<m_{i}\\
0<m_{r}\leq\cdots\leq m_{i+1}\leq m_{i}
}
}\right)t_{m_{1}}^{k_{1}}\cdots t_{m_{r}}^{k_{r}}\cdot t_{m_{j}}\ \ \ (m_{0}:=0)\nonumber \\
 & =\sum_{j=1}^{r}k_{j}(-1)^{r-j+1}\sum_{\substack{0<m_{1}<\cdots<m_{j}\\
0<m_{r}\leq\cdots\leq m_{j}
}
}t_{m_{1}}^{k_{1}}\cdots t_{m_{r}}^{k_{r}}\cdot t_{m_{j}}\label{eq:ttt}\\
 & \in\Phi(y\mathfrak{H}x).\nonumber 
\end{align}
Thus, the lemma is proved.
\end{proof}
From Lemma \ref{lem:key}, we have
\begin{align*}
L_{\alpha}(\phi(w)x) & =Z_{\ast}^{(\alpha)}(\psi(w))\qquad(w\in y\mathfrak{H}).
\end{align*}
Since $Z_{\ast}^{(\alpha)}$ is injective on $\mathfrak{H}^0$ (see \cite{Joy12,Bou14}), $L_{\alpha}(\phi(w)x)=0$
if and only if $\psi(w)=0$. Thus, for the proofs of Theorems 2 and
3, it is sufficient to show that $\ker\psi=y\mathfrak{H}\ast y\mathfrak{H}.$ 
\begin{lem}
\label{lem:gyaku}$\ker\psi\supset y\mathfrak{H}\ast y\mathfrak{H}.$
\end{lem}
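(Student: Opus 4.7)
The plan is to prove $\psi(w_{1}\ast w_{2})=0$ for $w_{1},w_{2}\in y\mathfrak{H}$ by leveraging the Hopf-algebra structure on $(\mathfrak{H}^{1},\ast,\Delta^{\circ})$, where $\Delta^{\circ}(z_{k_{1}}\cdots z_{k_{r}}):=\sum_{i=0}^{r}z_{k_{1}}\cdots z_{k_{i}}\otimes z_{k_{i+1}}\cdots z_{k_{r}}$ is ordinary deconcatenation. Via $\Phi$, this is Hoffman's Hopf algebra of quasi-symmetric functions; note that the paper's coproduct factors as $\Delta=(\id\otimes\rho)\circ\Delta^{\circ}$, where $\rho(z_{k_{1}}\cdots z_{k_{r}}):=z_{k_{r}}\cdots z_{k_{1}}$ is block-reversal.

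First I would establish three structural lemmas. \emph{(i)} $\Delta$ is a $\ast$-algebra morphism: this follows from the known bialgebra compatibility $\Delta^{\circ}(u\ast v)=\Delta^{\circ}(u)\ast\Delta^{\circ}(v)$ together with $\rho(u\ast v)=\rho(u)\ast\rho(v)$, which is immediate from the symmetric form of the $\ast$-recursion. \emph{(ii)} $\sigma_{1}$ is a $\ast$-derivation, i.e.\ $\sigma_{1}(u\ast v)=\sigma_{1}(u)\ast v+u\ast\sigma_{1}(v)$: under the algebra isomorphism $\Phi\colon(\mathfrak{H}^{1},\ast)\to(\mathrm{QSym},\cdot)$, the operator $\sigma_{1}$ pulls back from the derivation of $\mathrm{QSym}$ determined by $t_{i}\mapsto -t_{i}^{2}$ and extended by Leibniz, so the derivation property is automatic. \emph{(iii)} The composition $\mu:=(\ast)\circ(\id\otimes\tilde{S})\circ\Delta=(\ast)\circ(\id\otimes\tilde{S}\rho)\circ\Delta^{\circ}$ satisfies $\mu(w)=0$ whenever $w$ has positive depth. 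Equivalently, $\tilde{S}\rho$ is the antipode of the QSym Hopf algebra; this follows from the explicit formula $\tilde{S}\rho(z_{k_{1}}\cdots z_{k_{r}})=(-1)^{r}\sum z_{K_{1}}\cdots z_{K_{s}}$ summed over coarsenings of $(k_{r},\ldots,k_{1})$, which matches the classical Ehrenborg--Malvenuto--Reutenauer antipode.

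Assembling, for $w_{1},w_{2}\in y\mathfrak{H}$ I compute
\[
\psi(w_{1}\ast w_{2}) = (\ast)\circ(\id\otimes\tilde{S}\sigma_{1})\bigl(\Delta(w_{1})\ast\Delta(w_{2})\bigr) = \psi(w_{1})\ast\mu(w_{2})+\mu(w_{1})\ast\psi(w_{2}),
\]
where the first equality uses (i), and the second uses (ii) to split $\sigma_{1}(u_{(2)}\ast v_{(2)})$ into two derivation terms, the $\ast$-morphism property of $\tilde{S}$ to push $\tilde{S}$ through, and commutativity/associativity of $\ast$ to regroup the four resulting tensor components. Since $w_{1},w_{2}$ both have positive depth, (iii) gives $\mu(w_{1})=\mu(w_{2})=0$, and we conclude $\psi(w_{1}\ast w_{2})=0$.

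The main obstacle is the antipode identification in (iii): while it is classical in the QSym setting, in the paper's self-contained style one would prove it directly by induction on depth, showing $\sum_{i=0}^{r}z_{k_{1}}\cdots z_{k_{i}}\ast\tilde{S}(z_{k_{r}}\cdots z_{k_{i+1}})=0$ via the $\ast$-recursion and a telescoping over signed coarsenings of $(k_{r},\ldots,k_{1})$. A conceptually simpler but more computational alternative would avoid Hopf algebras entirely and argue directly from the explicit monomial formula \eqref{eq:ttt} combined with $\Phi(w_{1}\ast w_{2})=\Phi(w_{1})\Phi(w_{2})$, exhibiting a sign-reversing bijection on the indexing sequences.
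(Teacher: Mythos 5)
Your proof is correct and follows essentially the same route as the paper: both rest on writing $\psi=(\ast)\circ(\id\otimes\tilde{S}\sigma_{1})\circ\Delta$, using that $\Delta$ and $\tilde{S}$ are $\ast$-homomorphisms, that $\sigma_{1}$ interacts with $\ast$ as a derivation, and that the ``degree-zero'' map $\mu=(\ast)\circ(\id\otimes\tilde{S})\circ\Delta$ kills positive-depth elements. The only difference is packaging: the paper bundles all the $\sigma_{m}$ into the generating-function homomorphism $\lambda=(\ast)\circ(\id\otimes\tilde{S}\sigma)\circ\Delta$ and extracts the coefficient of $\alpha^{1}$ from $\lambda(u)\ast\lambda(v)$, which is exactly your Leibniz identity $\psi(u\ast v)=\psi(u)\ast\mu(v)+\mu(u)\ast\psi(v)$.
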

\begin{proof}
For $u,v\in y\mathfrak{H}$, we need to show that $\psi(u\ast v)=0$. 
We define a linear map
$\lambda\colon\mathfrak{H}^{1}\to\mathfrak{H}^{1}[[\alpha]]$ by 
\[
\lambda(z_{k_{1}}\cdots z_{k_{r}})=\sum_{m=0}^{\infty}\alpha^{m}\sum_{i=0}^{r}z_{k_{1}}\cdots z_{k_{i}}\ast\tilde{S}(\sigma_{m}(z_{k_{r}}\cdots z_{k_{i+1}}))
\]
and another linear map $\sigma\colon\mathfrak{H}^{1}\to\mathfrak{H}^{1}[[\alpha]]$
as $\sigma(w):=\sum_{m=0}^{\infty}\alpha^{m}\sigma_{m}(w)$. Then,
we have $\lambda=(\ast)\circ(\id\otimes\tilde{S}\sigma)\circ\Delta.$
Since the maps $\Delta$, $\sigma$, and $\tilde{S}$ are $\ast$-homomorphisms,
$\lambda$ is also a $\ast$-homomorphism. Thus, we have $\lambda(u\ast v)=\lambda(u)\ast\lambda(v)$.
Since the coefficients of $\alpha^{0}$ in $\lambda(u)$ and $\lambda(v)$
are $0$,
\[
\psi(u\ast v)=(\textrm{the coefficient of }\alpha\textrm{ in }\lambda(u\ast v))=0. \qedhere
\] 
\end{proof}
We define a bilinear product $\mathcal{\sha }$ on
$\mathfrak{H}^1$ as 
\begin{align*}
1\sha u & =u\sha 1=u,\\
z_{k}u\sha z_{l}v:= & z_{k}(u\sha z_{l}v)+z_{l}(z_{k}u\sha v).
\end{align*}
Then, we define a linear map $\overline{\psi}\colon y\mathfrak{H}\to y\mathfrak{H}$ as
\[
\overline{\psi}(z_{k_{1}}\cdots z_{k_{r}}):=\sum_{i=0}^{r-1}(-1)^{r-i}z_{k_{1}}\cdots z_{k_{i}}\sha \sigma_{1}(z_{k_{r}}\cdots z_{k_{i+1}}).
\]
Set $H_{d}:=\bigoplus_{r=0}^{d}\bigoplus_{k_{1},\dots,k_{r}\ge1}\mathbb{Q}\cdot z_{k_{1}}\cdots z_{k_{r}}\subset\mathfrak{H}^{1}$;
by definition, for $u\in H_{r}$ and $v\in H_{s}$, we have $u\ast v-u\sha v\in H_{r+s-1}.$
We now define a linear operator $\rho$ on $y\mathfrak{H}$ as\\
\[
\rho(z_{k_{1}}\cdots z_{k_{r}}):=\sum_{i=0}^{r-1}(-1)^{r-i}(z_{k_{1}}\cdots z_{k_{i-1}}\sha z_{k_{r}}\cdots z_{k_{i+1}})z_{k_{i}}.
\]

\begin{lem}
\label{lem:.shuffle-1}$\ker\rho\subset y\mathfrak{H}\sha y\mathfrak{H}$.
\end{lem}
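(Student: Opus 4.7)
My plan is to combine a reverse-inclusion computation with Radford's structure theorem for the shuffle algebra, followed by a leading-term analysis on a complementary subspace. The reverse inclusion says that $\rho$ vanishes on shuffle products, and a natural complement of $y\mathfrak{H}\sha y\mathfrak{H}$ in $y\mathfrak{H}$ is spanned by Lyndon words—on which I expect $\rho$ to be injective.

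The first step is to verify the reverse inclusion $y\mathfrak{H}\sha y\mathfrak{H}\subset\ker\rho$. In the spirit of the proof of Lemma~\ref{lem:gyaku}, I would introduce an $\alpha$-deformation of $\rho$—built from the coproduct $\Delta$ and $\sha$-compatible operators—that is manifestly a $\sha$-homomorphism, and then extract the desired vanishing on shuffle products from the appropriate coefficient of $\alpha$. The second step invokes Radford's theorem: under a suitable monomial order on $\mathfrak{H}$ (say $y<x$), the shuffle algebra $(\mathfrak{H},\sha)$ is a polynomial algebra on the Lyndon words, yielding a direct-sum decomposition $y\mathfrak{H}=(y\mathfrak{H}\sha y\mathfrak{H})\oplus C$, where $C$ is spanned by those Lyndon words lying in $y\mathfrak{H}$ (i.e., beginning with $y$).

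The third step—which is the crux—is to show that $\rho$ is injective on $C$. The defining sum for $\rho(z_{k_1}\cdots z_{k_r})$ contributes the monomial $z_{k_1}\cdots z_{k_r}$ itself with coefficient $+1$ from the maximal-index term (where the reversed suffix $z_{k_r}\cdots z_{k_{i+1}}$ is empty), while every remaining term is obtained by shuffling a proper prefix with a nontrivial reversed suffix and appending a single interior letter $z_{k_i}$. Exploiting the defining property of Lyndon words—that a Lyndon word is strictly smaller than each of its nontrivial rotations in the chosen order—one should identify $z_{k_1}\cdots z_{k_r}$ as the leading monomial of $\rho(w)$ for each Lyndon $w\in C$, giving injectivity of $\rho|_C$.

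The lemma then follows by combining the three steps: for any $u\in\ker\rho$, write $u=u_0+u_1$ with $u_0\in y\mathfrak{H}\sha y\mathfrak{H}$ and $u_1\in C$; by Step~1, $\rho(u_0)=0$, hence $\rho(u_1)=0$; by Step~3, $u_1=0$; therefore $u=u_0\in y\mathfrak{H}\sha y\mathfrak{H}$. The main obstacle is Step~3: the definition of $\rho$ intertwines shuffle and concatenation, so one must carefully choose a monomial order—compatible with both the Lyndon factorization and the operation of shuffling a prefix with a reversed suffix—so that no rearrangement produced by these shuffles surpasses $z_{k_1}\cdots z_{k_r}$. Identifying such rearrangements with partial rotations of $w$ and then invoking the Lyndon inequalities is the combinatorial heart of the argument.
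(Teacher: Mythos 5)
Your architecture (reverse inclusion, Radford decomposition $y\mathfrak{H}=(y\mathfrak{H}\sha y\mathfrak{H})\oplus C$ with $C$ spanned by Lyndon words, injectivity of $\rho$ on $C$) is reasonable in outline, but Step~3 --- which you correctly identify as the crux --- fails as stated: no monomial order can make $z_{k_1}\cdots z_{k_r}$ the leading monomial of $\rho(z_{k_1}\cdots z_{k_r})$ for every Lyndon word. Take the two Lyndon words $w=z_1z_2z_3$ and $w'=z_1z_3z_2$ in the alphabet $\{z_1,z_2,\dots\}$ (which is the relevant alphabet here: $\sha$ shuffles the letters $z_k$, not $x$ and $y$). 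The definition of $\rho$ (whose sum effectively runs over $1\le i\le r$) gives
\[
\rho(z_1z_2z_3)=z_1z_2z_3-z_1z_3z_2-z_3z_1z_2+z_3z_2z_1,\qquad
\rho(z_1z_3z_2)=z_1z_3z_2-z_1z_2z_3-z_2z_1z_3+z_2z_3z_1 .
\]
Each of $\rho(w),\rho(w')$ contains the \emph{other} Lyndon word with coefficient $-1$, so whatever total order you choose, $w$ and $w'$ cannot each dominate the other; worse, the matrix of $\rho$ restricted to the Lyndon coordinates is $\left(\begin{smallmatrix}1&-1\\-1&1\end{smallmatrix}\right)$, which is singular. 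Hence injectivity of $\rho|_C$ cannot be detected by any triangularity argument in the Lyndon basis: one is forced to use coefficients of non-Lyndon monomials (here, e.g., $z_3z_1z_2$), and at that point the Lyndon decomposition buys you nothing. Step~1 is also left vague --- $\rho$ concatenates the final letter $z_{k_i}$ rather than shuffling it, so it is not literally a coefficient of an $\alpha$-deformed $\sha$-homomorphism in the way $\psi$ is for $\ast$ in Lemma~\ref{lem:gyaku} --- though that part is at least true and repairable.

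The paper's proof avoids all of this by dualizing. Under the pairing $\langle X_{k_1}\cdots X_{k_r},z_{l_1}\cdots z_{l_s}\rangle$, the map $\rho$ is the adjoint of the map sending a word to its right-nested bracket $[X_{k_1},[\dots,[X_{k_{r-1}},X_{k_r}]\dots]]$. These brackets span the free Lie algebra $L$, and by Ree's theorem $y\mathfrak{H}\sha y\mathfrak{H}$ is exactly the orthogonal complement of $L$; therefore $\rho(u)=0$ forces $\langle a,u\rangle=0$ for all $a\in L$, i.e.\ $u\in y\mathfrak{H}\sha y\mathfrak{H}$, with no case analysis and no choice of complement. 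If you want to retain a Lyndon-word argument you would need the finer dual-basis (PBW versus word basis) machinery from Reutenauer's book, which is substantially more work than the adjointness computation.
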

\begin{proof}
Let $Y$ be the set of noncommutative polynomials of $X_{1},X_{2},\dots$
without constant terms. 
We define a bilinear map $\langle,\rangle:Y\otimes y\mathfrak{H}\to\mathbb{Q}$
as 
\[
\langle X_{k_{1}}\cdots X_{k_{r}},z_{l_{1}}\cdots z_{l_{s}}\rangle\coloneqq\begin{cases}
1 & (k_{1},\dots,k_{r})=(l_{1},\dots,l_{s}),\\
0 & {\rm \text{otherwise.}}
\end{cases}
\]
Let $L\subset Y$ be the set of Lie polynomials of $X_{1},X_{2},\dots$.
Then, we have \\
\begin{align} \label{eeqq1}
L & =\spann_{\mathbb{Q}}\{[X_{k_{1}},[X_{k_{2}},[\dots,[X_{k_{r-1}},X_{k_{r}}]\dots]]]\mid r\ge1,k_{1},\dots,k_{r}\ge1\}
\end{align}
and
\begin{align} \label{eeqq2}
y\mathfrak{H}\sha y\mathfrak{H}=\{u\in y\mathfrak{H}\mid\langle a,u\rangle=0\text{ for all }a\in L\}
\end{align}
(see \cite[Section 1]{Reu93}). 
Given any $r\ge1$ and $k_{1},\dots,k_{r}\ge1$,
we have 
\begin{align*}
&[X_{k_{1}},[X_{k_{2}},[\dots,[X_{k_{r-1}},X_{k_{r}}]\dots]]] \\
&=\sum_{i=1}^{r}(-1)^{r-i}\sum_{\substack{\{a_{1},\dots,a_{i-1}\}\sqcup\{b_{1},\dots,b_{r-i}\}=\{1,\dots,r-1\}\\
a_{1}<\cdots<a_{i-1}\\
b_{1}<\cdots<b_{r-i}
}
}X_{k_{a_{1}}}\cdots X_{k_{a_{i-1}}}X_{k_{r}}X_{k_{b_{r-i}}}\cdots X_{k_{b_{1}}}\\
 & =\sum_{i=1}^{r}(-1)^{r-i}\sum_{\substack{\sigma:\{1,\dots,r\}\to\{1,\dots,r\}\\
\sigma:{\rm bijection}\\
\sigma(1)<\cdots<\sigma(i-1)<\sigma(i)>\sigma(i+1)>\cdots>\sigma(r)
}
}X_{k_{\sigma(1)}}\cdots X_{k_{\sigma(r)}}
\end{align*}
by definition. Since
\[
\langle X_{k_{\sigma(1)}}\cdots X_{k_{\sigma(r)}},z_{l_{1}}\cdots z_{l_{r}}\rangle=\prod_{j=1}^{r}\delta_{k_{\sigma(j)},l_{j}}=\prod_{j=1}^{r}\delta_{k_{j},l_{\sigma^{-1}(j)}}=\langle X_{k_{1}}\cdots X_{k_{r}},z_{l_{\sigma^{-1}(1)}}\cdots z_{l_{\sigma^{-1}(r)}}\rangle,
\]
we have 
\begin{align*}
 & \langle[X_{k_{1}},[X_{k_{2}},[\dots,[X_{k_{r-1}},X_{k_{r}}]\dots]]],z_{l_{1}}\cdots z_{l_{r}}\rangle\\
 & =\sum_{i=1}^{r}(-1)^{r-i}\sum_{\substack{\sigma:\{1,\dots,r\}\to\{1,\dots,r\}\\
\sigma:{\rm bijection}\\
\sigma(1)<\cdots<\sigma(i-1)<\sigma(i)>\sigma(i+1)>\cdots>\sigma(r)
}
}\langle X_{k_{1}}\cdots X_{k_{r}},z_{l_{\sigma^{-1}(1)}}\cdots z_{l_{\sigma^{-1}(r)}}\rangle\\
 & =\sum_{i=1}^{r}(-1)^{r-i}\langle X_{k_{1}}\cdots X_{k_{r}},\sum_{\substack{\sigma:\{1,\dots,r\}\to\{1,\dots,r\}\\
\sigma:{\rm bijection}\\
\sigma(1)<\cdots<\sigma(i-1)<\sigma(i)>\sigma(i+1)>\cdots>\sigma(r)
}
}z_{l_{\sigma^{-1}(1)}}\cdots z_{l_{\sigma^{-1}(r)}}\rangle\\
 & =\sum_{i=1}^{r}(-1)^{r-i}\langle X_{k_{1}}\cdots X_{k_{r}},(z_{l_{1}}\cdots z_{l_{i-1}}\sha z_{l_{r}}\cdots z_{l_{i+1}})z_{l_{i}}\rangle\\
 & =\langle X_{k_{1}}\cdots X_{k_{r}},\rho(z_{l_{1}}\cdots z_{l_{r}})\rangle.
\end{align*}
Hence, for $u\in\ker\rho$, we have 
\[
\langle[X_{k_{1}},[X_{k_{2}},[\dots,[X_{k_{r-1}},X_{k_{r}}]\dots]]],u\rangle=\langle X_{k_{1}}\cdots X_{k_{r}},\rho(u)\rangle=0
\]
for any $r\ge1$ and $k_{1},\dots,k_{r}\ge1$, which implies that $u\in y\mathfrak{H}\sha y\mathfrak{H}$ by \eqref{eeqq1} and \eqref{eeqq2}. 
\end{proof}
\begin{lem}
\label{lem:.shuffle}$\ker\overline{\psi}\subset y\mathfrak{H}\sha y\mathfrak{H}$.
\end{lem}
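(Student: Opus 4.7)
The plan is to prove the stronger equality $\ker\overline{\psi}=\ker\rho$, from which the lemma follows at once from Lemma~\ref{lem:.shuffle-1}. The strategy is to exhibit $\overline{\psi}$ as $E\circ\rho$ for an obviously injective linear operator $E$.

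Define $E\colon y\mathfrak{H}\to y\mathfrak{H}$ on basis words by $E(z_{k_1}\cdots z_{k_{r-1}}z_{k_r}):=k_r\cdot z_{k_1}\cdots z_{k_{r-1}}z_{k_r+1}$; that is, $E$ increments the final index by $1$ and scales by its original value. Since every $k_r\ge 1$, this map sends distinct basis words to distinct basis words up to nonzero scalars, so $E$ is injective. Once the identity $\overline{\psi}=E\circ\rho$ is established, injectivity of $E$ will give $\ker\overline{\psi}=\ker\rho$, and Lemma~\ref{lem:.shuffle-1} finishes the proof.

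To prove $\overline{\psi}(w)=E(\rho(w))$ for $w=z_{k_1}\cdots z_{k_r}$, I would compare both sides term by term. Since every summand of $\rho(w)$ ends in some $z_{k_i}$, applying $E$ yields
\[
E(\rho(w))=\sum_{i=1}^{r}(-1)^{r-i}k_i\bigl(z_{k_1}\cdots z_{k_{i-1}}\sha z_{k_r}\cdots z_{k_{i+1}}\bigr)z_{k_i+1}.
\]
On the other side, expanding $\sigma_1$ produces $\overline{\psi}(w)$ as a double sum indexed by $(i,j)$, with $j\in\{i+1,\dots,r\}$ marking the position that $\sigma_1$ increments. For each fixed $j$, the core step is to show that the inner sum
\[
\sum_{i=0}^{j-1}(-1)^{r-i+1}\,z_{k_1}\cdots z_{k_i}\sha z_{k_r}\cdots z_{k_{j+1}}z_{k_j+1}z_{k_{j-1}}\cdots z_{k_{i+1}}
\]
collapses to $(-1)^{r-j}(z_{k_1}\cdots z_{k_{j-1}}\sha z_{k_r}\cdots z_{k_{j+1}})z_{k_j+1}$, after which summing over $j$ with the factors $k_j$ produces $E(\rho(w))$.

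I would prove this collapse by decomposing each shuffle according to the position of the distinguished letter $z_{k_j+1}$: because $z_{k_j+1}$ must lie after all letters of $z_{k_r}\cdots z_{k_{j+1}}$ and before all letters of $z_{k_{j-1}}\cdots z_{k_{i+1}}$, the left factor $z_{k_1}\cdots z_{k_i}$ splits into a prefix shuffled with $z_{k_r}\cdots z_{k_{j+1}}$ and a suffix shuffled with $z_{k_{j-1}}\cdots z_{k_{i+1}}$. Interchanging the order of summation then transforms the inner sum into an instance of the standard shuffle-Hopf-algebra antipode identity $\sum_{s=0}^{n}(-1)^{n-s}u^{(s)}\sha\mathrm{rev}(u^{(>s)})=\delta_{n,0}$ applied to the subword $z_{k_{q+1}}\cdots z_{k_{j-1}}$, forcing all terms but one to cancel and leaving the desired single term with $z_{k_j+1}$ at the end. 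The main obstacle is simply the bookkeeping for this combinatorial cancellation; no genuine difficulty arises once the antipode identity is invoked.
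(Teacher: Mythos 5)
Your proof is correct, and its endgame is the same as the paper's: both reduce the statement to Lemma \ref{lem:.shuffle-1} by showing that $\overline{\psi}$ and $\rho$ differ by the injective ``last-letter'' operator $z_{k_1}\cdots z_{k_r}\mapsto k_r z_{k_1}\cdots z_{k_{r-1}}z_{k_r+1}$ (your $E$ is exactly the paper's $\iota$, up to an overall sign that depends on which of the paper's two conventions for $\sigma_1$ one adopts and is irrelevant for kernels), whence $\ker\overline{\psi}=\ker\rho$. The only genuine difference is how the closed formula $\overline{\psi}(z_{k_1}\cdots z_{k_r})=\pm\sum_{j=1}^{r}(-1)^{r-j}k_j(z_{k_1}\cdots z_{k_{j-1}}\sha z_{k_r}\cdots z_{k_{j+1}})z_{k_j+1}$ is obtained. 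The paper extracts it from the quasi-symmetric-function computation \eqref{eq:ttt}: since $u\ast v\equiv u\sha v$ and $\tilde{S}\equiv\pm\id$ modulo lower depth, $\overline{\psi}(w)$ is precisely the top-depth part of $\psi(w)$, and taking the depth-$r$ part of \eqref{eq:ttt} gives the formula. You instead prove it directly inside the shuffle algebra: expand $\sigma_1$, split each shuffle at the distinguished letter $z_{k_j+1}$ via $u\sha(avb)=\sum_{u=u'u''}(u'\sha a)\,v\,(u''\sha b)$ for a single letter $v$, and collapse the inner alternating sum by the shuffle antipode identity. I checked the collapse: for fixed $j$ only the factorization with $q=j-1$ survives, with sign $(-1)^{r-j}$, exactly as you claim, and the small cases $r=1,2$ confirm the signs. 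Your route is self-contained and independent of \eqref{eq:ttt}, at the cost of redoing in the shuffle setting a cancellation the paper performs once in the quasi-shuffle setting; the paper's route is shorter but implicitly uses that $\overline{\psi}(w)$ is homogeneous of depth $r$, a point your argument does not need.
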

\begin{proof}
Let $u\in\ker\overline{\psi}$. From (\ref{eq:ttt}), we have
\[
\psi(z_{k_{1}}\cdots z_{k_{r}})\equiv\sum_{i=1}^{r}(-1)^{r-i+1}k_{i}(z_{k_{1}}\cdots z_{k_{i-1}}\sha z_{k_{r}}\cdots z_{k_{i+1}})z_{k_{i}+1} 
\quad \bmod(H_{r-1}).
\]
Then, we have
\[
\overline{\psi}(z_{k_{1}}\cdots z_{k_{r}})=\sum_{i=1}^{r}(-1)^{r-i+1}k_{i}(z_{k_{1}}\cdots z_{k_{i-1}}\sha z_{k_{r}}\cdots z_{k_{i+1}})z_{k_{i}+1}.
\]
Hence, $\rho=\iota\circ\overline{\psi}$, where $\iota$ is a linear
operator on $y\mathfrak{H}$ defined by 
\[
\iota(z_{k_{1}}\cdots z_{k_{r}}):=k_{r}z_{k_{1}}\cdots z_{k_{r-1}}z_{k_{r}+1}.
\]
Thus, $\ker\overline{\psi}\subset\ker\rho$. 
\end{proof}
\begin{lem}
\label{lem:harmonic}$\ker\psi\subset y\mathfrak{H}\ast y\mathfrak{H}.$
\end{lem}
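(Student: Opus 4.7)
The plan is to induct on the depth filtration $\{H_d\}$, using Lemma \ref{lem:.shuffle} together with the standard fact that $\ast$ and $\sha$ agree modulo lower depth.

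First I would verify that $\psi$ preserves the filtration and that its induced map on $H_d/H_{d-1}$ coincides with $\overline{\psi}$. The key observations are: (i) $\sigma_1$ is depth-preserving; (ii) since $d(z_k)=-z_k$ and $S_1(y)=x+y$, the map $\tilde S=S\circ d$ acts as multiplication by $(-1)^s$ on the graded quotient $H_s/H_{s-1}$ (every $y\mapsto x+y$ substitution in $S$ either preserves the word or strictly lowers its depth); and (iii) the relation $u\ast v\equiv u\sha v\pmod{H_{p+q-1}}$ for $u\in H_p$, $v\in H_q$, already invoked in the paper. Combining (i)--(iii), for $w=z_{k_1}\cdots z_{k_r}$ each summand $z_{k_1}\cdots z_{k_i}\ast\tilde S(\sigma_1(z_{k_r}\cdots z_{k_{i+1}}))$ lies in $H_r$, and
\[
\psi(w)\equiv\sum_{i=0}^{r-1}(-1)^{r-i}\,z_{k_1}\cdots z_{k_i}\sha\sigma_1(z_{k_r}\cdots z_{k_{i+1}})=\overline{\psi}(w)\pmod{H_{r-1}}.
\]

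Next I would induct on the top depth $d$ of $u\in\ker\psi$; the case $d=0$ is vacuous since $y\mathfrak{H}\cap H_0=0$. For the inductive step, decompose $u=u_d+u'$ with $u_d$ depth-$d$-homogeneous in $y\mathfrak{H}$ and $u'\in y\mathfrak{H}\cap H_{d-1}$. The symbol computation forces $\overline{\psi}(u_d)=0$, so Lemma \ref{lem:.shuffle} gives $u_d\in y\mathfrak{H}\sha y\mathfrak{H}$. Writing $u_d=\sum_j a_j\sha b_j$ with $a_j,b_j\in y\mathfrak{H}$ depth-homogeneous and $\deg a_j+\deg b_j=d$, set $v:=\sum_j a_j\ast b_j\in y\mathfrak{H}\ast y\mathfrak{H}$. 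Then $v\equiv u_d\pmod{H_{d-1}}$, and Lemma \ref{lem:gyaku} gives $\psi(v)=0$; hence $u-v\in\ker\psi\cap H_{d-1}$, and the inductive hypothesis yields $u-v\in y\mathfrak{H}\ast y\mathfrak{H}$, whence $u\in y\mathfrak{H}\ast y\mathfrak{H}$.

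The main obstacle is the graded symbol identification in the first step: one has to track the sign produced by $\tilde S$ on $H_s/H_{s-1}$ and check that it combines with $(-1)^{r-i}$ and with the identification $\ast\leftrightarrow\sha$ on the graded level to produce exactly $\overline{\psi}$, not some perturbation of it. Once this identification is in hand, the induction is essentially formal: Lemma \ref{lem:.shuffle} transfers each vanishing $\overline{\psi}(u_d)$ into membership in $y\mathfrak{H}\sha y\mathfrak{H}$, and Lemma \ref{lem:gyaku} together with the compatibility of $\ast$ and $\sha$ modulo lower depth lifts the information back to $y\mathfrak{H}\ast y\mathfrak{H}$ one filtration level at a time.
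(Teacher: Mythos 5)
Your proposal is correct and follows essentially the same route as the paper: both arguments induct on depth, use the congruence $\psi\equiv\overline{\psi}\pmod{H_{d-1}}$ together with Lemma \ref{lem:.shuffle} to write the top-depth part as a sum of shuffle products, replace $\sha$ by $\ast$ (which only changes lower-depth terms), and invoke Lemma \ref{lem:gyaku} to stay inside $\ker\psi$. The only cosmetic difference is that you justify $\psi\equiv\overline{\psi}$ directly from the sign of $\tilde S$ on graded quotients, whereas the paper extracts it from the quasisymmetric-function computation \eqref{eq:ttt}; both are valid.
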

\begin{proof}
We denote the depth ($=$maximal number of $y$ in the
monomials) of $w$ by $\dep(w)$ and prove that $w\in y\mathfrak{H}\ast y\mathfrak{H}$
for $w\in\ker\psi$ by induction on $\dep(w)$. Let $w$ be a nonzero
element of $\ker\psi$. Then, by Lemma \ref{lem:.shuffle}, there
exist $u_{1},\dots,u_{n},v_{1},\dots,v_{n}\in y\mathfrak{H}$ with
$\dep(u_{i})+\dep(v_{i})=\dep(w)\quad(i=1,\dots,n)$ such that 
\begin{align*}
w -u_{1}\sha v_{1}-\cdots-u_{n}\sha v_{n}\in H_{\dep(w)-1}.
\end{align*}
Since the depth of $u_{i}\sha v_{i}-u_{i}\ast v_{i}$
is smaller than $\dep(w)$, the depth of 
\begin{align*}
w': & =w-u_{1}\ast v_{1}-\cdots-u_{n}\ast v_{n}
\end{align*}
is smaller than the depth of $w$. By Lemma \ref{lem:gyaku}, $w'\in\ker\psi$;
thus, $w'\in y\mathfrak{H}\ast y\mathfrak{H}$ by the induction hypothesis.
Hence, we have $w\in y\mathfrak{H}\ast y\mathfrak{H}$.
\end{proof}
By Lemmas \ref{lem:gyaku} and \ref{lem:harmonic}, we have $\ker\psi=y\mathfrak{H}\ast y\mathfrak{H}.$
Thus, we obtain Theorems \ref{thm:main1} and \ref{thm:main2}.

\section*{Acknowledgment}
This work was supported by JSPS KAKENHI Grant Numbers JP18K13392 and JP19K14511.


\begin{thebibliography}{10}
\bibitem{Bou14}
O. Bouillot, 
`The algebra of Hurwitz multizeta functions',
C. R. Math.\ Acad.\ Sci.\ Paris, Ser.I \textbf{352} (2014), 865--869.

\bibitem{Hof15}M. E. Hoffman, `Quasi-symmetric functions and mod
$p$ multiple harmonic sums,' Kyushu J. Math.\ \textbf{69} (2015), 345--366.

\bibitem{HO03}
M. E. Hoffman and Y. Ohno, 
`Relations of multiple zeta values and their algebraic expression', 
J. Algebra \textbf{262} (2003), 332--347. 

\bibitem{Iga11}
M. Igarashi, 
`Cyclic sum of certain parametrized multiple series,' 
J. Number Theory \textbf{131} (2011), 508--518.

\bibitem{Iga12}
M. Igarashi, 
`A generalization of Ohno's relation for multiple zeta values,' 
J. Number Theory \textbf{132} (2012), 565--578.

\bibitem{Reu93}
C. Reutenauer, 
`Free Lie algebras', Lond.\ Math.\ Soc.\ Monogr., New Ser., Vol.7 (1993), Oxford: Clarendon Press. 

\bibitem{Joy12}
S. Joyner, 
`An algebraic version of the Knizhnik-Zamolodchikov equation', 
Ramanujan J., \textbf{28} 
(2012), 361--384.

\bibitem{KXY21}
M. Kaneko, C. Xu, and S. Yamamoto, 
`A generalized regularization theorem and Kawashima's relation for multiple zeta values', 
J. Algebra \textbf{580} (2021), 247--263.

\bibitem{Kaw09}
G. Kawashima, 
`A class of relations among multiple zeta values', 
J. Number Theory \textbf{129} (2009), 755--788.

\bibitem{Ohn99}
Y. Ohno, 
`A generalization of the duality and sum formulas on the multiple zeta values', 
J. Number Theory \textbf{74} (1999), 39--43.

\bibitem{TW10}
T. Tanaka and N. Wakabayashi, 
`An algebraic proof of the cyclic sum formula for multiple zeta values', 
J. Algebra \textbf{323} (2010), 76--778.
\end{thebibliography}
\end{document}